\newcommandx{\unsure}[2][1=]{\todo[linecolor=red,backgroundcolor=red!25,bordercolor=red,#1]{#2}}
\newcommandx{\change}[2][1=]{\todo[linecolor=blue,backgroundcolor=blue!25,bordercolor=blue,#1]{#2}}
\newcommandx{\info}[2][1=]{\todo[linecolor=OliveGreen,backgroundcolor=OliveGreen!25,bordercolor=OliveGreen,#1]{#2}}
\newcommandx{\improvement}[2][1=]{\todo[linecolor=Plum,backgroundcolor=Plum!25,bordercolor=Plum,#1]{#2}}
\newcommandx{\thiswillnotshow}[2][1=]{\todo[disable,#1]{#2}}
\def\moverlay{\mathpalette\mov@rlay}
\def\mov@rlay#1#2{\leavevmode\vtop{%
   \baselineskip\z@skip \lineskiplimit-\maxdimen
   \ialign{\hfil$\m@th#1##$\hfil\cr#2\crcr}}}
\newcommand{\charfusion}[3][\mathord]{
    #1{\ifx#1\mathop\vphantom{#2}\fi
        \mathpalette\mov@rlay{#2\cr#3}
      }
    \ifx#1\mathop\expandafter\displaylimits\fi}
\newcommand{\scaps}[1]{{\scshape #1}}
\newcommand{\bscaps}[1]{\textsc{\textbf{#1}}}
\newcommand{\slanted}[1]{\slshape{#1}}
\newcommand{\bfm}{\textbf}
\newcommand{\mcal}{\mathcal}
\DeclareMathOperator{\Ex}{{\bf E}}
\DeclareMathOperator{\Var}{{\bf Var}}
\newcommand{\cL}{\mcal{L}}
\newcommand{\cS}{\mcal{S}}
\newcommand{\D}{\mcal{D}}
\newcommand{\T}{\mcal{T}}
\newcommand{\NN}{\mathbb{N}}
\let\eps=\varepsilon
\let\theta=\vartheta
\let\rho=\varrho
\let\phi=\varphi
\newenvironment{theorem}
{
\refstepcounter{equation} 
\vspace{-1ex}
\ \\
\noindent
\begin{it}
\noindent
\bscaps{Theorem~\theequation.}\hspace{-0.5ex}
}
{\end{it} \vspace{1ex}}
\newenvironment{lemma}
{
\refstepcounter{equation} 
\vspace{-1ex}
\ \\
\noindent
\begin{it}
\noindent
\bscaps{Lemma~\theequation.}\hspace{-0.5ex}
}
{\end{it} \vspace{1ex}}
\def\QED{$\blacksquare$}
\def\inQED{$\square$}
\renewenvironment{proof}
{\vspace{1ex}\noindent{\slanted Proof.}\hspace{0.5em}}{\hfill \QED \vspace{1ex}}
\newenvironment{proofof}[1]
{\vspace{1ex}\noindent\scaps{Proof of #1.}\hspace{0.5em}}{\hfill \QED \vspace{1ex}}
\def\section{\@ifstar\unnumberedsection\numberedsection}
\def\numberedsection{\@ifnextchar[
  \numberedsectionwithtwoarguments\numberedsectionwithoneargument}
\def\unnumberedsection{\@ifnextchar[
  \unnumberedsectionwithtwoarguments\unnumberedsectionwithoneargument}
\def\numberedsectionwithoneargument#1{\numberedsectionwithtwoarguments[#1]{#1}}
\def\unnumberedsectionwithoneargument#1{\unnumberedsectionwithtwoarguments[#1]{#1}}
\def\numberedsectionwithtwoarguments[#1]#2{%
  \ifhmode\par\fi
  \removelastskip
  \vskip 1.7ex\goodbreak
  \refstepcounter{section}%
  \begingroup
  \noindent\leavevmode\Large\bfseries\scshape\normalsize
  \begin{center}\S \thesection.\ #2\end{center} 
  \endgroup
  \addcontentsline{toc}{section}{%
    \protect\numberline{\bfm{\thesection.}}%
    \hspace{2.5ex} #1}%
  }
\def\unnumberedsectionwithtwoarguments[#1]#2{%
  \ifhmode\par\fi
  \removelastskip
  \vskip 1.7ex\goodbreak
  \begingroup
  \noindent\leavevmode\Large\bfseries\scshape\centering 
  \begin{center} #2 \end{center} \par
  \endgroup
  \vskip 2ex\nobreak
  \addcontentsline{toc}{section}{%
    \hspace{1ex} #1}%
  }
\def\subsection{\@ifstar\unnumberedsubsection\numberedsubsection}
\def\numberedsubsection{\@ifnextchar[
  \numberedsubsectionwithtwoarguments\numberedsubsectionwithoneargument}
\def\unnumberedsubsection{\@ifnextchar[
  \unnumberedsubsectionwithtwoarguments\unnumberedsubsectionwithoneargument}
\def\numberedsubsectionwithoneargument#1{\numberedsubsectionwithtwoarguments[#1]{#1}}
\def\unnumberedsubsectionwithoneargument#1{\unnumberedsubsectionwithtwoarguments[#1]{#1}}
\def\numberedsubsectionwithtwoarguments[#1]#2{%
  \ifhmode\par\fi
  \removelastskip
  \vskip 1.7ex\goodbreak
  \refstepcounter{subsection}%
  \noindent
  \leavevmode
  \begingroup
  \bfseries\normalsize
  \S~\thesubsection\ \bscaps{#2.}
  \endgroup
  \addcontentsline{toc}{subsection}{%
    \hspace{2ex}\protect\numberline{\bfm{\thesubsection.}}%
    \hspace{1ex} #1}%
  }
\def\unnumberedsubsectionwithtwoarguments[#1]#2{%
  \ifhmode\par\fi
  \removelastskip
  \vskip 3ex\goodbreak
  \noindent
  \leavevmode
  \begingroup
  \bfseries\normalsize
  \S~\bscaps{#2.}
  \endgroup
  \addcontentsline{toc}{subsection}{%
    \hspace{1ex} #1}%
  }
\newcommand{\Gnp}{G(n,p)}
\begin{document}

\title[Monochromatic Schur triples]{\bscaps{Monochromatic Schur triples in randomly perturbed dense sets of integers}} 
\author{Elad Aigner-Horev} 
\address{Department of Mathematics and Computer Science, Ariel University, Ariel, Israel}
\email{horev@ariel.ac.il}

\author{Yury Person}
\address{Institut f\"ur Mathematik, Technische Universit\"at Ilmenau, 98684 Ilmenau, Germany}
\email{yury.person@tu-ilmenau.de}
\thanks{YP is supported by the Carl Zeiss Foundation.}

\maketitle

\begin{abstract}
Given a dense subset $A$ of the first $n$ positive integers,  we provide a short proof showing that for $p=\omega(n^{-2/3})$ the so-called 
{\sl randomly perturbed} set $A \cup [n]_p$ a.a.s. has the property that any $2$-colouring of it has a monochromatic Schur triple, i.e.\ 
a triple of the form $(a,b,a+b)$. This result is optimal since there are dense sets $A$, for which $A\cup [n]_p$ does not possess this property for $p=o(n^{-2/3})$.
\end{abstract}

\section{Introduction}
The model of randomly perturbed graphs was introduced by 
Bohman, Frieze and Martin in~\cite{BFM03}, 
where they considered for which $p=p(n)$ a Hamilton cycle appears a.a.s.\  if one studies 
the  model $G_\alpha\cup \Gnp$, where $G_\alpha$ is a graph on $n$ vertices with 
minimum degree $\delta(G)\ge \alpha n$ (for a fixed $\alpha>0$) and $\Gnp$ is the usual binomial random graph, i.e.\ each of the 
$\binom{n}{2}$ possible edges appears independently with probability $p$. Their result was 
a discovery of a phenomenon, that it suffices to take $p=C(\alpha)/n$, which is lower by a log-term needed 
for the appearance in the random graph alone, see e.g.~\cite{Bol98}. Further properties were studied 
in~\cite{BFKM04,KST,SV08} and, even more recently, various spanning structures in
 randomly perturbed graphs and hypergraphs were 
investigated in~\cite{BTW17,BHKM18,BDF17, BHKMPP18, BMPP18, DRRS18, HZ18,KKS16,KKS17,MM18}.

The study of Ramsey properties in random graphs was initiated by \L{}uczak, Ruci\'nski and Voigt~\cite{LRV92} and 
the the so-called symmetric edge Ramsey problem was settled completely by R\"odl and Ruci\'nski in a series of papers~\cite{RR93,RR94,RR95}.
Ramsey properties in randomly perturbed graphs were investigated first by  Krivelevich, Sudakov, and Tetali~\cite{KST}, who proved 
 that whenever one perturbs a sufficiently large $n$-vertex dense graph $G$ by adding to it $\omega(n^{2-2/(t-1)})$ edges chosen 
 uniformly at random then the resulting graph almost surely possesses the property that any $2$-colouring of 
 its edges admits either a monochromatic triangle (in the first colour) or a monochromatic copy of $K_t$ (in the second colour), 
 where here $t \geq 3$. Generally, one writes $G\longrightarrow (K_3,K_t)$ to denote this fact (and shortens it to 
 $G\longrightarrow (K_t)_r$ in the symmetric case and $r$ colors).  
 Moreover, the result of~\cite{KST} is asymptotically best possible in terms of the number of random edges added. 
 
In this short note we will study randomly perturbed dense sets of integers. More precisely, we denote by 
$[n]_p$ the model where each of the integers from $[n]:=\{1,2,\ldots,n\}$ is chosen independently with probability $p=p(n)$. 
The model $[n]_p$ itself was thoroughly investigated with respect to extremal properties (e.g.\ Szemer\'edi's theorem~\cite{CG16,KLR96,Schacht}) and Ramsey-type 
properties (partition regularity of equations~\cite{FRS10,GRR96}). 
 
 Given a dense set $A\subseteq [n]$, i.e.\ with at least $\alpha n$ elements for some fixed positive $\alpha$,  
we will study for which $p=p(n)$, the set $A\cup [n]_p$ a.a.s.\ satisfies the property that no matter how one colours $A\cup [n]_p$
with two colours, there will always be a monochromatic Schur triple, i.e.\ 
a triple of integers $x,y,z$ with  $x+y =z$. An old result of Schur~\cite{Schur} states that
 if  $\NN$ is finitely coloured, then there is always such a monochromatic triple. Moreover, the threshold $\Theta(n^{-1/2})$ for the random set 
 $[n]_p$ was determined by Graham, R\"odl and Ruci\'nski~\cite{GRR96} (for two colours) and 
 by Friedgut, R\"odl and Schacht~\cite{FRS10} (for any constant number of colours). 
 Our main result on Schur triples in the model $A\cup [n]_p$ and two colours is as follows.

\begin{theorem}\label{thm:Schur}
For every $\alpha > 0 $ and every $p=\omega(n^{-2/3})$ it holds that 
whenever $A \subseteq [n]$ has $|A| \geq \alpha n $ then a.a.s.\ $A \cup [n]_p$ has the property that in any $2$-colouring of it 
a monochromatic Schur triple appears. 
\end{theorem}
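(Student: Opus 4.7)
The plan is to fix an arbitrary $2$-coloring $\chi$ of $A\cup[n]_p$ and produce a monochromatic Schur triple. By pigeonhole the majority color class in $A$ has at least $\alpha n/2$ elements; assume without loss of generality that $|A_\red|\ge\alpha n/2$. If $A_\red$ itself contains a Schur triple we are done, so assume $A_\red$ is sum-free. Set $D:=(A_\red-A_\red)\cap[1,n-1]$, so that $|D|\ge|A_\red|-1\ge\alpha n/2-1$. For any $d\in D\cap(A\cup[n]_p)$, writing $d=a-b$ with $a>b$ in $A_\red$, the triple $(b,d,a)$ would be a red monochromatic Schur triple if $\chi(d)$ were red; hence every element of $D\cap[n]_p$ must be colored blue.

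It remains to locate a Schur triple entirely inside the forced-blue set $D\cap[n]_p$. The combinatorial core is that $D$ already contains $\Omega_\alpha(n^2)$ ordered Schur triples $(d_1,d_2,d_3)\in D^3$ with $d_1+d_2=d_3$. Indeed, every strictly decreasing triple $a_1>a_2>a_3$ in $A_\red$ produces the Schur triple $(a_1-a_2,\,a_2-a_3,\,a_1-a_3)\in D^3$, and each such Schur triple arises from at most $|A_\red|$ chains---one per choice of $a_2$, which together with $(d_1,d_2)$ determines $a_1=a_2+d_1$ and $a_3=a_2-d_2$. Thus the count is at least $\binom{|A_\red|}{3}/|A_\red|=\Omega_\alpha(n^2)$.

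For the probabilistic finish let $X$ denote the number of ordered Schur triples in $D^3$ whose three entries all belong to $[n]_p$. Linearity yields $\Ex[X]\ge c_\alpha n^2p^3=\omega(1)$ under the hypothesis $p=\omega(n^{-2/3})$. A routine second-moment estimate---grouping pairs $(T,T')$ of Schur triples by $|T\cap T'|\in\{1,2,3\}$ and using that each element (resp.\ pair of elements) of $D$ lies in at most $O(|D|)$ (resp.\ $O(1)$) Schur triples---yields $\Var(X)=O(n^3p^5+n^2p^4+n^2p^3)=o(\Ex[X]^2)$; Chebyshev's inequality then forces $X\ge 1$ asymptotically almost surely, producing a blue monochromatic Schur triple.

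The principal obstacle I expect is reversing the quantifiers in the final step: the preceding argument works for each fixed coloring, but the theorem asks for asymptotic almost sure existence of a monochromatic triple uniformly across colorings. A careful accounting---either via a union bound over sum-free subsets of $A$ combined with Janson-type exponential concentration (exploiting the Cameron--Erd\H{o}s quantitative scarcity of sum-free sets), or via a two-round sprinkling $[n]_p=[n]_{p_1}\cup[n]_{p_2}$ in which the first piece pins down the partition-dependent forced blue set and the independent second piece locates a Schur triple inside it---should resolve the quantifier swap without losing the $p=\omega(n^{-2/3})$ threshold.
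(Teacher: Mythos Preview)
Your approach is genuinely different from the paper's, and you have correctly identified the central obstacle: the set $D$ depends on the colouring, which the adversary chooses only after seeing $[n]_p$, so the second-moment computation cannot be carried out against a fixed family of triples. Unfortunately, neither of your proposed repairs closes this gap at the stated threshold. For the union bound plus Janson route, the number of sum-free subsets of $A$ of size at least $\alpha n/2$ can be $2^{\Theta(n)}$ (take $A=[n/2+1,n]$, every subset of which is sum-free, so Cameron--Erd\H{o}s buys nothing here). For a fixed such subset, Janson gives $\Pr[X=0]\le\exp\bigl(-\Theta(\mu^2/(\mu+\Delta))\bigr)$ with $\mu=\Theta(n^2p^3)$ and $\Delta=O(n^3p^5+n^2p^4)$; at $p=n^{-2/3}\omega$ with $\omega\to\infty$ slowly this exponent is only $\Theta(\omega^3)$, far too small to beat a $2^{\Theta(n)}$ union. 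The sprinkling idea fails for the same reason: $D$ is already determined by the colouring restricted to $A$, and the adversary fixes that restriction after seeing all of $[n]_p$, so splitting the randomness into two rounds does not decouple the dependence.

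The paper sidesteps the quantifier issue entirely by exhibiting a \emph{colouring-independent} certificate. It defines a $10$-tuple $\cL(x,y,d)$ consisting of a $4$-AP $(x-d,x,x+d,x+2d)$, a $3$-AP $(y-d,y,y+d)$, and a Schur triple $(d,\,y-x-d,\,y-x)$, and verifies by a short case analysis that $\cL(x,y,d)$ is itself $2$-Schur-Ramsey. Varnavides' theorem yields $\Omega_\alpha(n^2)$ choices of $(x,y,d)$ for which the seven AP-entries already lie in $A$; a single second-moment argument (with no adversary present) then shows that a.a.s.\ $[n]_p$ captures the remaining three entries $(d,\,y-x-d,\,y-x)$ for at least one such choice. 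Since the resulting copy of $\cL(x,y,d)\subseteq A\cup[n]_p$ forces a monochromatic Schur triple under \emph{every} $2$-colouring, no union bound over colourings is needed.
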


Let us say a few words about the nature of the threshold $\omega(n^{-2/3})$. The point is that, above $n^{-2/3}$, the set $A\cup[n]_p$ 
contains a.a.s.\ some $10$-element set, which, no matter how coloured by red and blue, always contains a monochromatic Schur triple. 
A similar situation occurs for $G_\alpha\cup G(n,p)\longrightarrow (K_3)_2$: it was shown in~\cite{BFKM04} that the threshold 
for the appearance of $K_6$ is then $p=\omega(1/n)$ and noted in~\cite{KST} that, since $K_6\longrightarrow (K_3)_2$, this 
is already the threshold.

The condition $p = \omega(n^{-2/3})$ from Theorem~\ref{thm:Schur} is asymptotically best possible. The expected number of Schur triples in the random set $[n]_p$ is $O(p^3n^2)$; this quantity vanishes whenever $p = o(n^{-2/3})$. A typical set at this density is a.a.s.\ {\em sum-free then} (i.e., has no Schur triples). This in turn implies that the perturbation of $[n/2+1,n]$, namely
$[n/2+1,n] \cup [n]_p$, a.a.s.\ admits a $2$-colouring with no monochromatic Schur triples. Indeed, one may colour the (dense) sum-free set $[n/2+1,n]$ with one colour and all elements (not already in $[n/2+1,n]$) coming from $[n]_p$ with the complementary colour. Moreover, for $r > 2$ colours the colouring in which the set $A:=[n/2+1,1]$ is coloured with one colour and $[n]_p$ with the remaining $r-1$ shows that, asymptotically, one does not gain on probability since the threshold for $[n]_p$ having the property that any $(r-1)$-colouring (for any constant $r\ge 3$) of it admits a monochromatic Schur triple is $\Theta(n^{-1/2})$~\cite{FRS10,GRR96}. 

\section{Proof of Theorem~\ref{thm:Schur}}

We refer to a set of integers as {\em $2$-Schur-Ramsey} if any $2$-colouring of it admits a monochromatic Schur triple. 
For $x,y,d \in [n]$ satisfying $y > x+d$ and $x>d$ we define the $10$-tuple $\cL(x,y,d)\in \NN^{10}$ as follows
\begin{equation}\label{eq:form1}
\cL(x,y,d) : = (x-d,x,x+d,x+2d,y-d,y,y+d, d, y-x-d, y-x).
\end{equation}
Its first four entries form a $4$-term arithmetic progression ($4$AP, hereafter), the next three form a $3$-term arithmetic progression ($3$AP, hereafter), 
and its last three points form a Schur triple denoted $\cS(x,y,d):=(d, y-x-d, y-x)$. 

\begin{lemma}\label{lem:local-reason}
Let $n \geq 1$ be an integer and let $x,y,d \in [n]$ satisfy $y > x+d$, $x>d$. Then 
$\cL(x,y,d)$ is $2$-Schur-Ramsey. 
\end{lemma}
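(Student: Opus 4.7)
My plan is to enumerate all Schur triples that sit inside the $10$-element set $\cL(x,y,d)$ and then run a short finite case analysis on just a handful of entries. Writing $a_i := x+(i-2)d$ for $i\in\{1,2,3,4\}$, $b_j := y+(j-2)d$ for $j\in\{1,2,3\}$, and $c := y-x-d$, $s := y-x$, a direct inspection shows that every Schur triple $(u,v,w)$ with $u+v=w$ and all three entries lying in $\cL(x,y,d)$ is one of the following twelve: $(d,a_i,a_{i+1})$ for $i\le 3$ and $(d,b_j,b_{j+1})$ for $j\le 2$ (the five ``shift by $d$'' triples); $(d,c,s)=\cS(x,y,d)$; $(s,a_i,b_i)$ for $i\le 3$; and $(c,a_{i+1},b_i)$ for $i\le 3$.

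Suppose for contradiction that some $2$-colouring of $\cL(x,y,d)$ has no monochromatic Schur triple, and by swapping colours assume $d$ is red. The five ``shift by $d$'' triples immediately give the constraint $(\star)$: no two consecutive entries of the $4$AP and no two consecutive entries of the $3$AP are both red. The triple $(d,c,s)$ forbids $c$ and $s$ from both being red, leaving three cases depending on the colours of $c$ and $s$.

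If $c$ and $s$ are both blue, then for each $i$ the triples $(c,a_{i+1},b_i)$ and $(s,a_i,b_i)$ force at least one red in each of $\{a_{i+1},b_i\}$ and $\{a_i,b_i\}$; applied at $i=2$ together with $(\star)$ this forces $b_2$ red, whereupon $i=1$ forces $a_1,a_2$ both red, contradicting $(\star)$. If $s$ is red and $c$ is blue, the relevant triples give ``$(a_i,b_i)$ not both red'' and ``$(a_{i+1},b_i)$ not both blue''; I case-split on whether $a_3$ or $b_2$ is red (one must be, by the $i=2$ constraint) and propagate along the $4$AP or $3$AP using $(\star)$ until each branch collapses onto either a doubly-red or a doubly-blue forbidden pair. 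The remaining case $c$ red, $s$ blue is the structural mirror of the previous one and is dispatched analogously, case-splitting on whether $a_2$ or $b_2$ is red.

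The main obstacle is not depth of argument but rather bookkeeping: one has to have the entire list of twelve Schur triples in hand and then identify the small sub-collection of constraints (the two ``no two adjacent reds'' conditions from $(\star)$, plus six ``not both of one colour'' conditions coming from $c$ and $s$) that actually closes each of the three cases. Once this is organised there is no arithmetic involved, and each case dies in two or three deductions.
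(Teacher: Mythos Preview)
Your proof is correct and follows the same overall strategy as the paper---a finite case analysis on a putative $2$-colouring with no monochromatic Schur triple---but the case decomposition is organised differently. The paper pivots on the colours of $x$, $y$, and $d$: after observing that not all three can agree, it treats the three cases according to which pair shares a colour, and in each case chases implications one Schur triple at a time. You instead normalise the colour of $d$, label the ten entries systematically, enumerate all twelve Schur triples in advance, and branch on the colours of $c=y-x-d$ and $s=y-x$; the five ``shift by $d$'' triples are compressed into the single structural constraint~$(\star)$. Your organisation is tidier and makes the logical dependencies transparent, at the cost of a little upfront bookkeeping; the paper's version is more narrative and needs no preliminary enumeration. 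One minor remark: your claim that the twelve listed triples are \emph{all} Schur triples inside $\cL(x,y,d)$ is only true generically (for special $x,y,d$ further coincidental relations can occur), but your argument never uses the ``only'' direction---you only need that these twelve \emph{are} Schur triples---so this does not affect validity.
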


\begin{proof}
The proof proceeds via a simple case analysis. Fix a $2$-colouring of $\cL(x,y,d)$ and assume towards a contradiction that it admits no monochromatic Schur triple. We may assume that $x,y$, and $d$ are not coloured in the same colour; for otherwise the assumption of no monochromatic Schur triple would force a contradiction in the shape of the Schur triple $(y-x,x-d,y-d)$ all coloured with the complimentary colour to the one used for $x,y$, and $d$. 

Nevertheless, two of $x,y$, and $d$ are coloured using the same colour. We consider the three possible cases arising here. Suppose, firstly, that $y$ and $d$ are coloured, say, red, and that $x$ is coloured blue. We may assume that $y-d$ and $y+d$ are both coloured blue in this setting. Then the Schur triple $(y-x-d,x,y-d)$ forces $y-x-d$ to be red and consequently $y-x$ is blue owing to $(y-x-d,d,y-x)$. The triple $(y-x,x-d,y-d)$ mandates that $x-d$ is red and then one concludes that $x+d$ is blue due to $(y-x-d,x+d,y)$. At this point one notices that $(y-x,x+d,y+d)$ is a blue Schur triple, a contradiction.

Suppose, secondly, that $x$ and $d$ are coloured, say, red and that $y$ is coloured blue. We may assume that $x-d$ and $x+d$ are both coloured blue. The triple $(y-x-d,x+d,y)$ implies that $y-x-d$ is red and then $y-x$ is blue due to $(d,y-x-d,y-x)$. Consequently, $y-d$ is red owing to $(y-x,x-d,y-d)$ and then one arrives at $(y-x-d,x,y-d)$ as a red Schur triple, a contradiction. 

Suppose, thirdly (and finally), that $x,y$ are coloured, say, blue and that $d$ is coloured red. Due to $(y-x,x,y)$ we may assume that $y-x$ is coloured red. Next, we have that $y-x-d$ is blue due to $(d,y-x-d,y-x)$, that $y-d$ is red due to $(y-x-d,x,y-d)$, that $x+d$ is red due to $(y-x-d,x+d,y)$, that $y+d$ is blue due to $(y-x,x+d,y+d)$, that $x-d$ is blue due to $(y-x,x-d,y-d)$, and that $x+2d$ is blue due to $(x+d,d,x+2d)$. All this ends with $(y-x-d,x+2d,y+d)$ being a blue Schur triple; contradiction. 
\end{proof}

Facilitating our proof is the following result due to Varnavides~\cite{Var}; which roughly speaking asserts that dense sets in $[n]$ have $\Omega(n^2)$ $4$APs. 
Originally Varnavides' result was phrased for $3$APs, but a standard supersaturation argument generalizes   to arithmetic progressions of any fixed length.  

\begin{theorem}{\em (Varnavides~\cite{Var})}\label{thm:Varnavides}
For every $\alpha > 0 $ there exists an $n_0:= n_0(\alpha)$ and a $g(\alpha) > 0$ such that whenever $n \geq n_0$ and $A \subseteq [n]$ satisfy $|A| \geq \alpha n$ then $A$ contains at least $g(\alpha) n^2$ $4$APs. 
\end{theorem}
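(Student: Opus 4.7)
The plan is the standard Varnavides supersaturation argument for $4$APs, bootstrapping from Szemer\'edi's theorem. Szemer\'edi's theorem for $4$-term progressions furnishes, for each $\delta > 0$, a threshold $N_0 = N_0(\delta)$ such that every subset of $[N]$ of density at least $\delta$ contains a $4$AP whenever $N \geq N_0$; I set $\delta := \alpha/4$ and fix $N := N_0(\alpha/4)$.

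Next, I would cover $[n]$ by a large family of disjoint $N$-APs (``blocks''). For each common difference $d \in \{1, 2, \ldots, D\}$ with $D := \lfloor \alpha n/(2N) \rfloor$, partition $[n]$ into its $d$ residue classes modulo $d$, each of which is an AP of common difference $d$ and length at least $2N/\alpha$, and carve each residue class into consecutive length-$N$ blocks. For fixed $d$, these blocks are pairwise disjoint, cover all but at most $dN \leq \alpha n/2$ points of $[n]$, and number $\Theta(n/N)$. Hence $\sum_{B} |A \cap B| \geq |A| - \alpha n/2 \geq \alpha n/2$, so the average of $|A \cap B|$ over blocks of common difference $d$ is at least $\alpha N/2$; since $|A \cap B| \leq N$, a Markov-type inequality shows that at least an $\alpha/4$-fraction of these blocks have $|A \cap B| \geq \delta N$. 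Summing over $d \in [1, D]$ yields $\Omega_\alpha(n^2/N^2)$ such $\delta$-dense blocks in total.

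By the choice of $N$, every $\delta$-dense block contains a $4$AP of $A$. A given $4$AP $(y, y+e, y+2e, y+3e) \subseteq A$ lies in at most $\lfloor (N-1)/3 \rfloor$ blocks of our family: any containing block must have common difference $d$ satisfying $d \mid e$ and $d \geq 3e/(N-1)$, and the number of divisors of $e$ in $[1, (N-1)/3]$ is bounded by $(N-1)/3$. Dividing out this bounded multiplicity, $A$ contains $\Omega_\alpha(n^2/N^3) = g(\alpha) n^2$ distinct $4$-term arithmetic progressions, as required. The only substantive step is the averaging; the rest is bookkeeping, and the main care needed is choosing $\delta$, $N$, and $D$ consistently so that the averaged block density comfortably exceeds the Szemer\'edi threshold while the block partition still covers the bulk of $A$.
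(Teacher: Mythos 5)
Your argument is the standard Varnavides supersaturation proof via Szemer\'edi's theorem for $4$-term progressions, and it is correct: the averaging over length-$N$ blocks of each common difference $d\le D$, the choice $\delta=\alpha/4$, and the multiplicity bound of at most $(N-1)/3$ containing blocks per $4$AP all check out, yielding $\Omega_\alpha(n^2/N^3)=g(\alpha)n^2$ distinct $4$APs. The paper does not prove this theorem but cites Varnavides and explicitly remarks that ``a standard supersaturation argument'' gives the $4$AP version, which is exactly the argument you have written out.
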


Dense sets in $[n]$ have an abundance of $4$APs then. This in turn implies that there are elements in $[n]$ that are repeatedly being used by $4$APs found in the dense set as their common steps; in fact there are many such {\em popular common steps}. Making this precise, define for a set $A \subseteq [n]$ and a parameter $\eps >0$ the set
$$
\D_\eps(A) :=\{d \in [n]: \text{$A$ has $\geq \eps n$ $4$APs with common step $d$}\}
$$
to be the set of common steps {\sl popular} in the sense that each of these participates in at least $\eps n$ $4$APs (in $A$). 

\begin{lemma}\label{lem:popular} {\em (Linearly many popular common steps)}
For every $\alpha >0$ there exists an $\eps := \eps(\alpha) > 0$  such that for $n$ sufficiently large  $|\D_\eps(A)| \geq \eps n$ holds whenever $A \subseteq [n]$ satisfies $|A| \geq \alpha n$. 
\end{lemma}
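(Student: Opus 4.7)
The plan is a straightforward double-counting argument on 4APs in $A$, combining the $\Omega(n^2)$ lower bound from Theorem~\ref{thm:Varnavides} with the trivial upper bound of $O(n)$ for the number of 4APs in $[n]$ sharing a given common step. Let $g := g(\alpha)$ be the constant supplied by Varnavides' theorem, and I would choose the threshold $\eps := g/2$ (and take $n$ large enough so that the conclusion of Theorem~\ref{thm:Varnavides} applies to $A$).

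For each $d \in [n]$, let $N_d$ denote the number of $4$APs contained in $A$ whose common difference equals $d$. The first step is to note that $N_d \leq n - 3d \leq n$ simply because a $4$AP with step $d$ in $[n]$ is determined by its first term, which lies in $[1, n-3d]$. The second step is to apply Theorem~\ref{thm:Varnavides} to get
\[
\sum_{d \in [n]} N_d \;\geq\; g n^2.
\]
The third step is to split this sum according to whether $d \in \D_\eps(A)$ or not. By the very definition of $\D_\eps(A)$, each $d \notin \D_\eps(A)$ contributes strictly less than $\eps n = (g/2) n$, so the total contribution of such $d$ is at most $n \cdot (g/2) n = (g/2) n^2$.

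Consequently, the contribution of the popular steps satisfies
\[
\sum_{d \in \D_\eps(A)} N_d \;\geq\; g n^2 - \tfrac{g}{2} n^2 \;=\; \tfrac{g}{2} n^2,
\]
and since each individual $N_d$ is at most $n$ by the first step, the number of popular common steps must be at least $(g/2) n = \eps n$, which is exactly the claim. There is no genuine obstacle here: the only thing that requires mild care is ensuring that the same $\eps$ serves both as the density threshold in the definition of $\D_\eps(A)$ and as the lower bound on $|\D_\eps(A)|$, which forces the specific choice $\eps = g(\alpha)/2$ made above.
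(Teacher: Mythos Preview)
Your proof is correct and follows essentially the same double-counting argument as the paper, including the identical choice $\eps = g(\alpha)/2$. The paper simply compresses your three steps into the single inequality $g(\alpha) n^2 \leq \#4\mathrm{AP}(A) < |\D_\eps(A)|\,n + n\cdot \eps n$, which is precisely your bound $\sum_d N_d \leq |\D_\eps(A)|\,n + (n - |\D_\eps(A)|)\,\eps n$ slightly relaxed.
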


\begin{proof} Given $\alpha$, let $g(\alpha)$ be as asserted by Theorem~\ref{thm:Varnavides}. Set $\eps:=g(\alpha)/2$ and let $\# 4AP(A)$ denote the number of $4$APs in $A$. 
By Theorem~\ref{thm:Varnavides}, the set $A$ has at least $g(\alpha)n^2$ $4$APs whenever $n$ is sufficiently large. Then owing to 
\[
g(\alpha) n^2 \leq \# 4AP(A) < |\D_\eps(A)|n + n\cdot \eps n
\]
the lemma follows.
\end{proof}

We are now ready to prove Theorem~\ref{thm:Schur}. 
 
\begin{proofof}{Theorem~\ref{thm:Schur}}
By Lemma~\ref{lem:local-reason} it suffices to show that a.a.s. $A \cup [n]_p$ contains $\cL(x,y,d)$ for some $x,y,d \in [n]$
 (with $y > x+d$ and $x>d$). By Theorem~\ref{thm:Varnavides} the set $A$ contains 
 at least $g(\alpha) n^2$ $4$APs. Moreover, Lemma~\ref{lem:popular} asserts that there is an $\eps := \eps(\alpha)$  such that 
$|\D_\eps(A)| \geq \eps n$. 
Given $d \in \D_{\eps}(A)$ and a pair of {\em distinct} $4$APs both with common step $d$, we may write them  parametrically 
as $(x-d,x,x+d,x+2d)$ and $(y'-d,y',y'+d,y'+2d)$ with $y'>x$. Setting $y:=y'+d$, we see that  the Schur triple $\cS(x,y,d)=(d, y-x-d, y-x)$ is well-defined. 

Let $\T$ be the set of all such Schur triples $\cS(x,y,d)$. Since there are at least $\eps n$ choices for $d\in D_\eps(A)$ and $A$ contains at least 
$\eps n$ $4$APs with common difference $d$, there are, for a fixed $d\in D_\eps(A)$, at least $\eps n-1$ different possible values for $y-x$. 
Therefore, the number of such Schur triples is at least $\eps^2n^2/4$, i.e.\ $|\T|\ge \eps^2n^2/4$. 


It remains to argue that a.a.s. $[n]_p$ contains a member of $\T$. The expected number of members of $\T$ captured by $[n]_p$ is at least $\eps^2 p^3n^2/4$.
 We write $X_t$ for the indicator random variable indicating whether $t=\cS(x,y,d) \in \T$ is captured by $[n]_p$ 
 and then put $X := \sum_{t\in \T} X_t$. Using Chebyshev's inequality we arrive at: 
 \[
  \Pr\left[X=0\right]\le \frac{\Var(X)}{(\Ex X)^2}\le \frac{1}{\Ex(X)}+\frac{\sum_{t\neq t' \in \T} \Ex [X_t X_{t'}]}{\Ex(X)^2}.
 \]
The number of pairs $t,t' \in \T$, $t \not= t'$, having a single entry in common is  at most $3n^3$, and each such pair satisfies $\Ex [X_t X_{t'}] \leq p^5$ so in total all such pairs contribute $O(p^5n^3)$ to the sum appearing on the r.h.s.\  above. Next, the number of pairs $t,t' \in \T$, $t \not= t'$, having two entries in common is  at most $2n^2$, with the total contribution of $O(n^2p^4)$. 
Using the lower bound on $\Ex (X)\ge \eps^2 p^3n^2/2$ and $p = \omega(n^{-2/3})$ we have
\[
\Pr \big[ X=0 \big] \leq \frac{1}{\Omega(p^3n^2)} + \frac{O(p^5n^3)}{\Omega(p^6n^4)} +\frac{O(p^4n^2)}{\Omega(p^6n^4)}=o(1),
\]
finishing the proof.
\end{proofof}

\section{Concluding remarks}
In this note we studied the first Ramsey-type question for randomly perturbed sets of integers, i.e.\ the model $A\cup [n]_p$. It would be interesting to determine the thresholds for 
more general partition regular systems of equations. For the threshold results in random sets see the work of Friedgut, R\"odl and Schacht~\cite{FRS10}. 
 
\bibliographystyle{amsplain} 
\bibliography{Schur-Lit.bib}

\end{document}